\numberwithin{equation}{section}
\theoremstyle{plain}
\newtheorem{thm}{Theorem}[section]
\newtheorem{cor}[thm]{Corollary}
\newtheorem{lemma}[thm]{Lemma}
\newtheorem{prop}[thm]{Proposition}
\theoremstyle{remark}
\newtheorem{rem}[thm]{Remark}
\newtheorem{ex}[thm]{Example}
\theoremstyle{definition}
\newtheorem{defi}[thm]{Definition}
\def\ot{\otimes}
\DeclareMathOperator{\Mat}{Mat}
\DeclareMathOperator{\COL}{COL}
\DeclareMathOperator{\Pic}{Pic}
\DeclareMathOperator\Diag{Diag}
\subjclass[2020]{primary: 15A69, 14Q20, 14Q65 
}
\newcommand\ignore[1]{}
\newcommand\CC{{\mathbb{C}}}
\newcommand\PP{{\mathbb{P}}}
\newcommand\RR{{\mathbb{R}}}
\newcommand\QQ{{\mathbb{Q}}}
\def\operatorname#1{\mathop{\rm #1}\nolimits}
\def\Pic{\operatorname{Pic}}
\def\det{\operatorname{det}}
\newcommand{\Chi}{\ensuremath \raisebox{2pt}{$\chi$}}
\newcommand{\pb}{\ar@{}[dr]|{\text{\pigpenfont J}}}
\newcommand{\xleftrightarrow}[2][]{\ext@arrow 3359\leftrightarrowfill@{#1}{#2}}
\newcommand{\xdasharrow}[2][->]{
\tikz[baseline=-\the\dimexpr\fontdimen22\textfont2\relax]{
\node[anchor=south,font=\scriptsize, inner ysep=1.5pt,outer xsep=2.2pt](x){#2};
\draw[shorten <=3.4pt,shorten >=3.4pt,dashed,#1](x.south west)--(x.south east);
}}
\newcommand\m{{\mathfrak m}}
\begin{document}
\title{Characteristic numbers and chromatic polynomial of a tensor}

\author[Conner]{Austin Conner}
\address{Department of Mathematics, Harvard University, Cambridge, MA 02138}
\email{aconner@math.harvard.edu}

\author[Micha{\l}ek]{Mateusz Micha{\l}ek}
\address{
	University of Konstanz, Germany, Fachbereich Mathematik und Statistik, Fach D 197
	D-78457 Konstanz, Germany
}
\email{mateusz.michalek@uni-konstanz.de}

\begin{abstract}
We introduce the characteristic numbers and the chromatic polynomial of a tensor. Our approach generalizes and unifies the chromatic polynomial of a graph and of a matroid, characteristic numbers of quadrics in Schubert calculus, Betti numbers of complements of hyperplane arrangements and Euler characteristic of complements of determinantal hypersurfaces and the maximum likelihood degree for general linear concentration models in algebraic statistics.
\end{abstract}

\thanks{AC is supported by the NSF grant nr 2002149. MM is supported by the DFG grant nr 467575307}
\maketitle

\section{Introduction}
To a tensor $T\in \CC^a\ot \CC^n\ot\CC^n$ we naturally associate a homogeneous polynomial $P_T$ of degree $a$ in $n-1$ variables. We call the coefficients of $P_T$, which are always integers, the \emph{characteristic numbers} of $T$. Two of the variables of $P_T$ play a special role. Setting all of the other variables to zero we obtain a bivariate homogeneous polynomial $\Chi_T$ that we call the \emph{chromatic polynomial}. Using a related construction, we also define the \emph{relative chromatic polynomial} $\Chi'_T$ and give conditions when $\Chi_T=\Chi'_T$. 

Our setting generalizes many of the important invariants in mathematics.
\begin{itemize}
\item The reduced chromatic polynomial of a representable matroid. This is the case when the contraction $\CC^a(T)$ is a space of simultaneously diagonalizable matrices.
\item The reduced chromatic polynomial of a graph. This is the case when the contraction $\CC^a(T)$ is the row space of the adjacency matrix of the graph.
\item Characteristic numbers of linear systems of quadrics. More precisely the number of quadrics in a linear system that pass through a given number of general points and are tangent to a given number of general hyperplanes. This is the case when the contraction $\CC^a(T)$ consists of symmetric matrices.
\item Maximum likelihood degree of general linear concentration models. This is the case when the contraction $\CC^a(T)$ consists of general symmetric matrices.
\item Euler characteristic of determinantal hypersurfaces. This case arises for arbitrary $T$. However, for special $T$, e.g.~when $\CC^a(T)$ consists of diagonal matrices, we obtain interesting cases, like complements of hyperplane arangements.
\end{itemize}

Our setting is based on the variety of complete collineations $\COL_n$. It is a smooth projective variety with a natural map $\pi_1:\COL_n\rightarrow \PP(\CC^n\otimes \CC^n)$. By contraction, we identify a tensor $T\in \CC^a\ot \CC^n\ot\CC^n$ with the linear space $\CC^a(T)$ of $n\times n$ matrices. By projectivising and taking the strict transform by $\pi_1$ we obtain a subvariety $H_T\subset \COL_n$. The main object of our study is the cohomology class $[H_T]$ associated to $H_T$ in the cohomology ring of $\COL_n$. Intersection product allows us to associate a polynomial function $P_T$ to $[H_T]$ on the Picard group $\Pic_\QQ(\COL_n)$. The vector space $\Pic_\QQ(\COL_n)$ comes with a distinguished basis (in fact two of them), thus we obtain well-defined coefficints of $P_T$. It turns out that these numbers have already appeared in special cases in various disciplines of mathematics  in examples above ranging from combinatorics, through algebraic statistics and topology to intersection theory. As the reader might have noticed, in the examples above we have mentioned diagonal, symmetric and general matrices, however not skew-symmetric matrices. Our construction also works in that case and we believe it may be used, for example to introduce invariants for $p$-groups.

Our article is very much inspired by and based on previous results. The first crucial ones are the seminal papers \cite{June1, June2}. Those were later generalized from representable matroids to nonrepresentable ones \cite{adiprasito2018hodge}, however in our case the geometry of the representable case plays the most important role. This topic may be seen as the `diagonal case' of our constructions. Although, as we point out, many very special things happen in this setting, the generalization to arbitrary tensors goes smoothly. 

The second series of articles relates to algebraic statistics. Here our inspirations are drawn from the maximum likelihood for linear concentration models as described in \cite{StUh}. In the recent papers \cite{ja1, ja2} connections to the cohomology of complete quadrics were made. This topic may be seen as the `symmetric case' of our construction.

Finally, we would like to refer to \cite{June1, dimca2003hypersurface} from where we have drawn the connections to topology.

Our main tools are drawn from results in classical algebraic geometry, especially \cite{Pragacz, LaksovLascouxThorup, ThaddeusComplete, DeConciniProcesi1}.

The plan of the article is as follows. We begin with Section \ref{sec:exm} where we present motivating examples. In Section \ref{sec:def} we present our main construction of characteristic numbers and the chromatic polynomial. We further prove basic theorems about those invariants.  In the final Section \ref{sec:rank} we present a numerical algorithm to compute the chromatic numbers. We also present results how these numbers look like for tensors of fixed rank. As we show our invariants have a potential of distinguishing tensors of high rank.

\section*{Acknowledgements}
We would like to thank Josh Grochow for pointing us towards possible applications in the theory of $p$-groups. We are grateful to Paul Breiding for interesting discussions about numerical algebra and condition numbers, which lead us to a better, more stable version of our algorithms. We thank Rodica Dinu and Tim Seynnaeve for a careful reading of the article and important comments.
 
\section{Motivating examples}\label{sec:exm}
In this section we present various examples showing how our construction links different branches of mathematics. At this point a lot may seem a numerical coincidence, however as we will prove the choices of objects (graphs, linear systems of quadrics, statistical models, determinantal hypersurfaces) are arbitrary and always work. In each example we use different fonts to indicate the numbers that are the same. Each case is based on a theorem from the next section.

In the first example we show how our construction generalizes chromatic polynomials of arbitrary graphs.
\begin{ex}\label{ex:graph}
The chromatic polynomial $\chi_G(k)$ of the graph $G$ counts the number of proper vertex colorings with $k$ colors. As long as $G$ has at least one edge we have $\chi_G(1)=0$, thus we define the reduced chromatic polynomial $\bar\chi(k):=\chi(k)/(k-1)$.

Let us fix an arbitrary orientation of all edges of $G$. For simplicity we will assume that $G$ is connected.
We define the adjacency matrix $A_G$ for the graph $G=(V,E)$ as a $|E|\times |V|$ matrix with rows indexed by edges $E$ and columns labelled by vertices $V$ where $A_G(e,v)=1$ if $v$ is the head of $e$, $A_G(e,v)=-1$ if $v$ is the tail of $e$ and $0$ otherwise. The matrix $A_G$ defines a surjection from $\CC^{|E|}$ to the codimension one subspace $C_0\subset \CC^{|V|}$ defined by the condition that the coordinates sum up to zero. Equivalently, the transpose $A_G^t$ defines the injection $C_0^*\subset \CC^{|E|}$.

Let $G$ be the $6$-cycle. We have:
\[\chi_G(k)=(k-1)^6+(k-1),\qquad \bar\chi_G(k)=(k-1)^5+1=k^5-5k^4+\emph{10}k^3-10k^2+{\bf{5}}k.\]

The adjacency matrix is:
\[A_G=
\begin{pmatrix}
-1&1&0&0&0&0\\
0&-1&1&0&0&0\\
0&0&-1&1&0&0\\
0&0&0&-1&1&0\\
0&0&0&0&-1&1\\
1&0&0&0&0&-1\\
\end{pmatrix}
\]
Our aim is to associate to $G$ a linear subspace of diagonal matrices. The image of $A_G^t$ is the five dimensional subspace $C_0^*\subset \CC^{|E|}=\CC^6$ with coordinates summing up to zero. From now on we regard $\CC^{|E|}$ as the space of $6\times 6$ diagonal matrices and $C_0^*$ as the subspace of diagonal matrices with trace zero.

We invert all (invertible) matrices in $C_0^*$ obtaining (after closing) a hypersurface of diagonal matrices of degree ${\bf 5}$, defined by the fifth elementary symmetric polynomial in six variables. If we take a general\footnote{Here `general' means belonging to a Zariski open set in the Grassmannian. Readers not familiar with algebraic geometry may consider a `random' subspace.} subspace of codimension $i$ (e.g.~$i=2$) of $C_0^*$ then the image will be a variety of codimension $i+1$ of the degree equal to the absolute value of the $(i+1)$-st coefficient of $\bar\chi_G$ (e.g.~$\emph{10}$).

In this example the tensor $T\in \CC^5\otimes \CC^6\otimes \CC^6$ and $\CC^5(T)$ is the space of traceless, diagonal $6\times 6$ matrices. We have:
\[\chi_T=\chi_T'=
  a^4
  +5\cdot 4a^3b
  +10\cdot 6 a^2b^2
  +10\cdot 4 ab^3
  +{\bf 5}\cdot b^4
\] 
\end{ex}

In the second example we show how our construction generalizes enumerative problems on quadrics.
\begin{ex}\label{ex:quad}
Consider the following eight dimensional family of degree two polynomials in four variables:
\[ax_0^2+bx_1^2+cx_2^2+dx_3^2+ex_0x_1+fx_1x_2+gx_2x_3+hx_3x_0,\]
where $a,b,c,d,e,f,g,h$ are parameters and $x_0,\dots,x_3$ variables. Each (nonzero) member of this family defines a degree two hypersurface in $\PP^3$. Let us pick seven general hyperplanes in $\PP^3$. There are ${\bf{9}}$ smooth projective quadratic surfaces that are tangent to all the given hyperplanes.

The above family may be identified with the following space of symmetric matrices:
\[
\begin{pmatrix}
a&e&0&h\\
e&b&f&0\\
0&f&c&g\\
h&0&g&d\\
\end{pmatrix}.
\]
Inverting all (invertible) matrices in that space and closing the image we obtain a four dimensional algebraic variety of degree ${\bf{9}}$.

In this example the tensor $T\in \CC^8\otimes \CC^4\otimes \CC^4$ and $\CC^8(T)$ is the space of symmetric matrices given above. We have:
\[\chi_T=
  a^7
  +3\cdot 7 a^6 b
  +9\cdot 21 a^5 b^2
  +17\cdot 33 a^4b^3
  +21\cdot 33 a^3b^4
  +21\cdot 21 a^2b^5
  +17\cdot 7 ab^6
  +{\bf 9} b^7
\] 
\[\chi_T'=
  a^7
  +3\cdot 7 a^6 b
  +9\cdot 21 a^5 b^2
  +17\cdot 35 a^4b^3
  +21\cdot 35 a^3b^4
  +21\cdot 21 a^2b^5
  +15\cdot 7 ab^6
  +5 b^7
\] 
\end{ex}

In the third example we show how our construction generalizes maximum likelihood degree.
\begin{ex}\label{ex:MLE}
Let us consider the following linear space of concentration matrices:
\[
A_{a,b,c,d,e}=\begin{pmatrix}
a&b&d\\
b&c&e\\
d&e&(a+b+c+d+e)\\
\end{pmatrix}.
\]
This means that we consider a family of probability distributions, each one being a multivariate Gaussian distribution on $\RR^3$, parameterized by such $a,b,c,d,e\in \RR$ that $A_{a,b,c,d,e}$ is positive definite and the mean $\mu\in\RR^3$. The associated probability distribution is:
\[f_{a,b,c,d,e,\mu}(x)=\frac{(\det A_{a,b,c,d,e})^{\frac{1}{2}}}{(2\pi)^{\frac{3}{2}}}\exp{\left(-\frac{1}{2}(x-\mu)^TA_{a,b,c,d,e}(x-\mu)\right)}.\]
Such a family of probability distributions is called a statistical model. The one above is known as a linear concentration model \cite{MR0277057}, as it is given by linear conditions on the concentration matrix. One of the aim of statistics is to fit the parameters (in our case $a,b,c,d,e,\mu$) of the model, so that it best explains the given data. The data is a finite family of points $x_1,\dots, x_n\in \RR^3$. First estimating $\mu$ is easy, as one takes the mean of $x_i$'s. For simplicity let us assume that $
\mu=0$, which may be always achieved by shifting the data. 

Our aim is to maximize the likelihood function in parameters $a,b,c,d,e$:
\[\prod_{i=1}^n f_{a,b,c,d,e,0}(x_i).\]
As logarithm is monotonic one considers the log-likelihood function:
\[\sum_{i=1}^n \log f_{a,b,c,d,e,0}(x_i),\]
which, up to a constant equals:
\[\frac{n}{2}\log(A_{a,b,c,d,e})+\sum_{i=1}^n x_i^TA_{a,b,c,d,e}x_i.\]
First one computes the number of complex critical points of the function. By taking the partial derivatives, when $x_i$'s are general, we obtain {\bf two} complex critical points. This number is known as the maximum likelihood degree, which in this case coincides with the degree of the model. Out of the critical points, the maximum we look for will be the unique point for which $A_{a,b,c,d,e}$ is positive definite.

We may take the inverses of all invertible matrices in the space of symmetric matrices specified by the model. We obtain a variety of degree {\bf two}.

In this example the tensor $T\in \CC^5\otimes \CC^3\otimes \CC^3$ and $\CC^5(T)$ is the space of symmetric $3\times 3$ matrices $A_{a,b,c,d,e}$. We have:
\[\chi_T=\chi_T'=
  a^4
  +2\cdot 4a^3b
  +4\cdot 6a^2b^2
  +4\cdot 4ab^3
  +{\bf 2}\cdot b^4
\] 
\end{ex}

In the last example we show how our construction generalizes the Euler characteristic of a determinantal hypersurface.
\begin{ex}\label{ex:Euler}
Consider the homogeneous cubic $f=(ac-b^2)d$ in $\PP^3$. It defines a surface that consists of $\PP^2$ and a cone over the second Veronese of $\PP^1$, both intersecting in the second Veronese of $\PP^1$. Thus the Euler characteristic equals:
\[\Chi(V(f))=\Chi(\PP^2)+1+\Chi(\PP^1)-\Chi(\PP^1)=4.\]
Hence the complement of $V(f)$ in $\PP^3$ has Euler characteristic equal to $\bf{0}$.

The cubic $f$ is the determinant of the matrix:
\[
\begin{pmatrix}
a&b&0\\
b&c&0\\
0&0&d\\
\end{pmatrix}.
\]
Inverting all (invertible) matrices in the above space of matrices we obtain a dominant map, i.e.~a parametrization of a variety of degree $a_1=1$.
We may also cut the above space of matrices with one (resp.~two, resp.~three) general hyperplanes. Then, inverting all (invertible) matrices we obtain a parametrization of a variety of degree $a_2=2$ (resp.~$a_3=2$, resp.~$a_4=1$).
We have:
\[\sum_{i=1}^4 (-1)^i a_i={\bf{0}}.\] 

In this example the tensor $T\in \CC^4\otimes \CC^3\otimes \CC^3$ and $\CC^4(T)$ is the space of symmetric $3\times 3$ matrices as above. We have:
\[\chi_T=\chi_T'=
  a^3
  +2\cdot 3a^2b
  +2\cdot 3ab^2
  +b^3
\]
\end{ex}

\section{Main results}\label{sec:def}
Let $\Mat_n$ be the space of $n\times n$ matrices that we identify with $\CC^n\otimes \CC^n$. We start by presenting two equivalent constructions of the variety of complete collineations $\COL_n$. 

For $i=1,\dots, n-1$ let $D_i\subset \PP(\Mat_n)$ be the projectivisation of the locus of matrices of rank at most $i$. We may consider a sequence of blow-ups:
\[\PP(\Mat_n)=:X_0\leftarrow X_1 \leftarrow\dots\leftarrow X_{n-2},\]
where in the $i$-th step we blow-up the strict transform of $D_i$. The variety of complete collineations $\COL_n:=X_{n-2}$. 

For the second construction let us consider the rational map:
\[\PP(\Mat_n)\dashrightarrow \PP(\CC^n\otimes \CC^n)\times \PP(\bigwedge^2\CC^n\otimes\bigwedge^2\CC^n)\times\dots\times\PP(\bigwedge^{n-1}\CC^n\otimes\bigwedge^{n-1}\CC^n),\]
where the map to the $i$-th component is given by taking all $i\times i$ minors of a matrix. The closure of the image of this map is $\COL_n$. We note that the map $\PP(\Mat_n)\dashrightarrow 
\PP(\bigwedge^{n-1}\CC^n\otimes\bigwedge^{n-1}\CC^n)$ may be identified with matrix inversion.

The second construction gives us natural projections $\pi_i:\COL_n\rightarrow \PP(\bigwedge^i\CC^n\otimes\bigwedge^i\CC^n)$ for $i=1,\dots, n-1$. In the Picard group of $\COL_n$ we obtain the divisors $L_i$ as pull-backs of hyperplanes $\pi_i^*(H)$. From now on we work in the vector space $\Pic_\QQ(\COL_n)$. The classes $L_i$ form a basis of that vector space.
\begin{rem}
We note that the vector space $\Pic_\QQ(\COL_n)$ has one more natural basis: the exceptional divisors of the blow-ups from the first construction together with the class of the pull-back of a hyperplane in $\Mat_n$, i.e.~$L_1$. 
\end{rem} 
Every cohomology class $S\in H^{2i}(\COL_n)$ gives a homogeneous polynomial $P_S$ of degree $i$ on $\Pic_\QQ(\COL_n)$ defined by:
\[P_S(D):=[SD^i]\in H^0(\COL_n,\QQ)\simeq \QQ.\]
\begin{defi}
Let $T\in\CC^a\otimes \CC^n\otimes \CC^n$ be such a tensor that the contraction $\CC^a(T)\subset \CC^n\otimes\CC^n$ contains a matrix of rank at least $n-2$. Let $d:=\dim \CC^a(T)$.
We define the polynomial on $\Pic_\QQ(\COL_n)$:
\[P_T(D):=S D^d \in H^0(\COL_n,\QQ)\simeq \QQ,\]
where $S$ is the Poincar\'e dual of  the strict transform of $\CC^a(T)$ by $\pi_1$.

In the basis $L_i$ we have:
\[P_T(\sum_{i=1}^{n-1} a_i L_i):=\sum_{b_1+\dots+b_{n-1}=d}{\binom{d}{b_1,\dots,b_{n-1}}}S\prod_{i=1}^{n-1} L_i^{b_i}.\]
We call the coefficients $T(b_1,\dots, b_{n-1}):=S\prod_{i=1}^{n-1} L_i^{b_i}$ the \emph{characteristic numbers} of the tensor $T$. 

Restricting to the line through $L_1$ and $L_{n-1}$ we obtain the \emph{chromatic polynomial} of $T$:
\[\Chi_T(aL_1+bL_{n-1}):=\sum_{i=0}^d \binom{d}{i} SL_1^i L_{n-1}^{d-i}.\]
\end{defi}

We note that by the definition the characteristic numbers are also the multidegree of the strict transform $S$ of $\CC^a(T)$ under the embedding 
\[S\subset \COL_n\subset \PP(\CC^n\otimes \CC^n)\times \PP(\bigwedge^2\CC^n\otimes\bigwedge^2\CC^n)\times\dots\times\PP(\bigwedge^{n-1}\CC^n\otimes\bigwedge^{n-1}\CC^n).\]

\begin{prop}
Any characteristic number $T(b_1,\dots,b_{n-1})$ is equal to the number of invertible matrices that satisfy $b_i$ polynomial conditions obtained by taking general linear combinations of $i\times i$ minors.
\end{prop}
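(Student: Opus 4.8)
\emph{Proof proposal.}
The plan is to identify $T(b_1,\dots,b_{n-1})$ with the degree of an honest zero-cycle and then to apply Kleiman's generic transversality theorem twice: once to make the relevant intersection reduced and zero-dimensional, and once to push it off the non-invertible locus. Recall from the second construction that $\pi=(\pi_1,\dots,\pi_{n-1})$ embeds $\COL_n$ into the product $Y:=\prod_{i=1}^{n-1}\PP(\bigwedge^i\CC^n\otimes\bigwedge^i\CC^n)$, with $L_i=\pi_i^*(H)$ the pullback of the hyperplane class of the $i$-th factor; as noted right after the definition, $T(b_1,\dots,b_{n-1})=S\prod_i L_i^{b_i}$ is therefore the multidegree of $S$ for this embedding. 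Since $\sum_i b_i=\dim S$, this multidegree equals the number of points, counted with multiplicity, in the intersection of $S$ with $b_i$ general hyperplanes from the $i$-th factor, for each $i$. On the $i$-th factor the coordinate $x_{I,J}$ pulls back along $\pi_i$ to the $i\times i$ minor on rows $I$ and columns $J$, so a general hyperplane there pulls back to the divisor cut out by a general linear combination of $i\times i$ minors; thus it suffices to prove that such a general intersection is reduced, zero-dimensional, and supported on invertible matrices.

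Next I would invoke transversality. Let $G:=\prod_{i=1}^{n-1}\PGL(\bigwedge^i\CC^n\otimes\bigwedge^i\CC^n)$, acting transitively on the smooth homogeneous variety $Y$, and fix an arrangement $\Lambda\subset Y$ consisting of $b_i$ coordinate hyperplanes in the $i$-th factor, so that $\codim_Y\Lambda=\sum_i b_i=\dim S$. A general translate $g\Lambda$, $g\in G$, is precisely a general choice of linear combinations of $i\times i$ minors. Since we work in characteristic zero, Kleiman's theorem applied to the fixed subvariety $S$ and the moving $g\Lambda$ in $Y$ gives, for general $g$, that $S\cap g\Lambda$ is proper of the expected dimension $\dim S+(\dim Y-\dim S)-\dim Y=0$ and is transverse along the smooth locus of $S$. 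Applying the same theorem to $\Sing S$, whose expected intersection dimension with $g\Lambda$ is negative, shows that $S\cap g\Lambda$ avoids $\Sing S$; hence $S\cap g\Lambda$ is a reduced zero-cycle of length $S\prod_i L_i^{b_i}=T(b_1,\dots,b_{n-1})$.

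It remains to see that every point of $S\cap g\Lambda$ lies over an invertible matrix. Let $Z\subseteq S$ be the closed locus lying over non-invertible matrices, i.e.\ the union of $S\cap\{\det=0\}$ with the part of $S$ meeting the exceptional divisors of $\pi_1$. Whenever $\CC^a(T)$ contains an invertible matrix the function $\det$ is not identically zero on $S$, so $Z$ is a proper closed subset and $\dim Z\le\dim S-1$. Applying Kleiman once more to each irreducible component of $Z$, its expected intersection dimension with $g\Lambda$ is at most $-1$, so $Z\cap g\Lambda=\varnothing$ for general $g$. Therefore every intersection point lies in the open locus over which $\pi_1$ is an isomorphism onto the invertible matrices of $\PP(\CC^a(T))$, and there each imposed condition is literally the vanishing of the chosen general linear combination of $i\times i$ minors. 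This identifies $T(b_1,\dots,b_{n-1})$ with the claimed number of invertible matrices.

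The routine inputs are the multidegree interpretation and the transversality that makes the zero-cycle reduced; the genuine content, and the step I expect to be hardest, is the boundary control in the third paragraph, namely that the general minor-conditions meet $S$ only in invertible matrices. Concretely this requires both that $\det\not\equiv 0$ on $S$ and that $\dim Z<\dim S$, which is where the standing hypothesis on the rank of matrices in $\CC^a(T)$ must be used to guarantee that $S$ is not contained in the boundary and that its generic point is invertible; one must also verify that on the relevant open locus each $\pi_i$ genuinely computes $i\times i$ minors, so that the geometric count matches the stated algebraic conditions.
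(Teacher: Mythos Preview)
Your proof is correct and follows essentially the same approach as the paper's: the paper argues more tersely, simply noting that each $L_i$ is base-point-free so that general representatives intersect $S$ in a reduced zero-cycle lying outside the exceptional divisors and outside the rank-$(n-1)$ locus, whereas you make the same mechanism explicit via Kleiman's transversality in the ambient product of projective spaces. Your separation of the transversality step from the boundary-avoidance step, and your flagging of the need for an invertible matrix in $\CC^a(T)$, are refinements of what the paper leaves implicit.
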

\begin{proof}
All $L_i$ are base point free. We fix general representatives for the each of $b_i$ divisors equivalent to $L_i$. Intersecting them with the strict transform $S$ of $\CC^a(T)$ we obtain $T(b_1,\dots,b_{n-1})$ many points in $\COL_n$, all of them outside of the exceptional divisors of the blow-up and outside of the locus corresponding to matrices of rank $n-1$. Outside of the exceptional divisors, the blow-up $\COL_n\rightarrow \Mat_n$ is an isomorphism. The image of each divisor $L_i$ in $\Mat_n$ is the zero locus of a linear combination of $i\times i$ minors. Hence, the images of the divisors intersect $\CC^a(T)$ in $T(b_1,\dots,b_{n-1})$ many points corresponding to invertible matrices (and possibly a large subset of matrices of rank at most $n-1$). 
\end{proof}
\begin{cor}\label{cor:invMat}
Let $i:\PP(\Mat_n)\dashrightarrow \PP(\Mat_n)$
be the rational map inverting the matrices, which may be identified with the gradient of the determinant.
Let $\Gamma_T\subset \PP(\Mat_n)\times \PP(\Mat_n)$ be the restriction of the graph to $\PP(\CC^a(T))\subset\PP(\Mat_n)$, i.e.~the closure of pairs of matrices $([A],[A^{-1}])\in \PP(\Mat_n)\times \PP(\Mat_n)$ where $A\in \CC^a(T)$ and is invertible.

Then
\[\Chi_T(aL_1+bL_{n-1}):=\sum_{i=0}^d \binom{d}{i} m_ia^i b^{d-i},\]
where $(m_0,m_1,\dots)$ is the multidegree of $\Gamma_T$.
\end{cor}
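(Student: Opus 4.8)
The plan is to identify the characteristic numbers $S\,L_1^{i}L_{n-1}^{d-i}$ occurring in the definition of $\Chi_T$ with the multidegrees $m_i$ of $\Gamma_T$, by pushing forward along a single morphism and applying the projection formula. Once $S\,L_1^{i}L_{n-1}^{d-i}=m_i$ is established, substituting into
$\Chi_T(aL_1+bL_{n-1})=\sum_{i=0}^d\binom{d}{i}a^i b^{d-i}\,S\,L_1^{i}L_{n-1}^{d-i}$
gives the statement immediately.

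First I would package the two relevant projections from the second construction of $\COL_n$ into one map. Set
\[
\Phi:=(\pi_1,\pi_{n-1})\colon \COL_n\longrightarrow \PP(\Mat_n)\times \PP(\textstyle\bigwedge^{n-1}\CC^n\otimes\bigwedge^{n-1}\CC^n).
\]
Under the canonical identification $\bigwedge^{n-1}\CC^n\cong(\CC^n)^*$ the target becomes $Y:=\PP(\Mat_n)\times\PP(\Mat_n)$, and, as already noted in the text, the second factor $\pi_{n-1}$ is matrix inversion, i.e.\ the map $i=\grad\det$ (the adjugate), which on invertible matrices sends $[A]\mapsto[A^{-1}]$. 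Writing $h_1,h_2$ for the hyperplane classes of the two factors of $Y$, and using $\pi_1=\mathrm{pr}_1\circ\Phi$, $\pi_{n-1}=\mathrm{pr}_2\circ\Phi$, we get $L_1=\Phi^*h_1$ and $L_{n-1}=\Phi^*h_2$.

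Next I would identify $\Phi(S)$ with $\Gamma_T$ and check the restriction is birational. The strict transform $S$ maps birationally to $\PP(\CC^a(T))$ under $\pi_1$, and over the dense locus of invertible matrices $\Phi$ sends the point over $[A]$ to $([A],[A^{-1}])$; hence $\Phi(S)$ is the closure of $\{([A],[A^{-1}]):A\in\CC^a(T)\text{ invertible}\}=\Gamma_T$. Composing $\Phi|_S\colon S\to\Gamma_T$ with the first projection $\mathrm{pr}_1\colon\Gamma_T\to\PP(\CC^a(T))$ recovers $\pi_1|_S$, which has degree one; since this degree factors as $\deg(\mathrm{pr}_1)\cdot\deg(\Phi|_S)$ with both factors positive integers, both equal one. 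Thus $\Phi|_S$ is birational onto $\Gamma_T$ and $\Phi_*[S]=[\Gamma_T]$. The projection formula then yields, for each $i$,
\[
S\,L_1^{i}L_{n-1}^{d-i}
=\deg\!\big([S]\cdot\Phi^*(h_1^{i}h_2^{d-i})\big)
=\deg\!\big(\Phi_*[S]\cdot h_1^{i}h_2^{d-i}\big)
=[\Gamma_T]\cdot h_1^{i}h_2^{d-i}
=m_i,
\]
the last equality being the definition of the multidegree of $\Gamma_T\subset\PP(\Mat_n)\times\PP(\Mat_n)$.

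The one genuine point requiring care — the main obstacle — is the birationality of $\Phi|_S$ onto $\Gamma_T$, equivalently that $\Phi_*[S]=[\Gamma_T]$ with multiplicity exactly one. This needs that the generic element of $\CC^a(T)$ be invertible, so that $\pi_{n-1}$ genuinely agrees with inversion on a dense open subset of $S$ and $\Gamma_T$ is the honest graph, and that $S$ meet neither the exceptional divisors of $\COL_n\to\Mat_n$ nor the rank-$\le n-1$ locus in codimension zero, so that no fibers collapse and no extra components are introduced. Granting this (which the hypothesis on $\CC^a(T)$ is designed to ensure), the computation above is purely formal and the corollary follows.
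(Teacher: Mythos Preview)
Your proof is correct and follows essentially the same route as the paper. Both rest on the single observation that $\pi_{n-1}$ is matrix inversion (the adjugate/gradient of the determinant); the paper then invokes the preceding proposition (counting invertible matrices satisfying general linear conditions on $1\times 1$ and on $(n-1)\times(n-1)$ minors, via base-point-freeness of the $L_i$), whereas you package the same geometry via the projection formula along $\Phi=(\pi_1,\pi_{n-1})$ and the birationality of $\Phi|_S$ onto $\Gamma_T$. The content is the same; your version is simply more explicit about the cohomological bookkeeping.
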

\begin{proof}
This follows as in the previous proposition, once we notice that on the projectivisations the inversion of matrices may be identified with the gradient of the determinant and taking $(n-1)\times (n-1)$ minors.
\end{proof}
We see that we obtain the chromatic polynomial of $T$ by restricting the gradient of the determinant to $\CC^a(T)$ and looking at the multidegree of the graph. There is a closely related, but not the same in general, construction where we take the gradient of the restriction of the determinant.
\begin{defi}
Consider the polynomial $\det_{|\CC^a(T)}$ on $\PP(\CC^a(T))$. Its gradient defines a rational map:
\[\nabla(\det_{|\CC^a(T)}):\PP(\CC^a(T))\dashrightarrow \PP(\CC^a(T)^*).\]
Let $(m_0,m_1,\dots,m_d)$ be the multidegree of the graph of $\nabla\det_{|\CC^a(T)}$ considered as the subvariety of $\PP(\CC^a(T))\times \PP(\CC^a(T)^*)$.

We define the \emph{relative chromatic polynomial} of $T$ by:
\[\chi'_T(a,b):=\sum_{i=0}^d\binom{d}{i} m_ia^i b^{d-i}.\]
 
 Note that equivalently $\nabla(\det_{|\CC^a(T)})$ may be defined as a composition of the inclusion $\PP(\CC^a(T))\subset \PP(\Mat_n)$ with the gradient of the determinant and then with the projection $\PP(\Mat_n^*)\dashrightarrow \PP(\CC^a(T)^*)$ from $\PP(\CC^a(T)^\perp)\subset \PP(\Mat_n^*)$.
\end{defi}
\begin{lemma}
Let $h$ be a homogeneous polynomial on a vector space $V$. Let $L\subset V$ be a vector subspace not contained in the singular locus of $V(h)$. Consider two maps:
\[\nabla h: \PP(V)\dashrightarrow \PP(V^*),\]
\[\nabla (h_{|L}):\PP(L)\dashrightarrow \PP(L^*)=\PP(V^*/L^\perp).\]
Suppose that the map $\nabla (h_{|L})$ is generically finite.

The multidegree of the graph of $\nabla h$ restricted to $\PP(L)$ equals the multidegree of the graph of $\nabla (h_{|L})$ if and only if $\PP(L^\perp)$ is disjoint from $\overline{\nabla h(\PP(L))}$. If $L$ is general, then this condition is satisfied.
\end{lemma}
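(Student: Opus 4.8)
The plan is to build everything on the factorization recorded in the definition preceding the statement: $\nabla(h_{|L}) = p\circ(\nabla h|_{\PP(L)})$, where $p\colon\PP(V^*)\dashrightarrow\PP(V^*/L^\perp)=\PP(L^*)$ is the linear projection away from the center $\PP(L^\perp)$. Write $\phi:=\nabla h|_{\PP(L)}$, let $Y:=\overline{\nabla h(\PP(L))}=\overline{\phi(\PP(L))}$, and let $\Gamma\subset\PP(L)\times\PP(V^*)$ and $\Gamma'\subset\PP(L)\times\PP(L^*)$ be the graphs of $\phi$ and of $\nabla(h_{|L})$, both irreducible of dimension $d:=\dim\PP(L)$. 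Let $\alpha,\beta$ and $\alpha',\beta'$ be the pullbacks to $\Gamma$ and to $\Gamma'$ of the hyperplane classes of the two factors, so $m_i=\int_\Gamma\alpha^i\beta^{d-i}$ and $m_i'=\int_{\Gamma'}(\alpha')^{i}(\beta')^{d-i}$. The map $\id\times p$ induces a rational map $\sigma\colon\Gamma\dashrightarrow\Gamma'$ commuting with the first projections; since both graphs are birational to $\PP(L)$ via those projections, $\sigma$ is birational. Its indeterminacy sits over $\Gamma\cap(\PP(L)\times\PP(L^\perp))$, whose image in $\PP(V^*)$ is exactly $Y\cap\PP(L^\perp)$, so the whole statement is about whether this set is empty.

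For "$\Leftarrow$", assume $Y\cap\PP(L^\perp)=\emptyset$. Then $p$ is regular on all of $Y$, hence $\id\times p$ is a morphism on $\PP(L)\times Y\supseteq\Gamma$ and $\sigma\colon\Gamma\to\Gamma'$ is a birational morphism. As $p$ is cut out by the subsystem of hyperplanes through $\PP(L^\perp)$, which is base-point-free on $Y$, one gets $(p|_Y)^*\cO(1)=\cO_Y(1)$, whence $\sigma^*\alpha'=\alpha$ and $\sigma^*\beta'=\beta$. The projection formula for the degree-one morphism $\sigma$ then gives
\[
m_i=\int_\Gamma\alpha^i\beta^{d-i}=\int_\Gamma\sigma^*\!\big((\alpha')^{i}(\beta')^{d-i}\big)=\int_{\Gamma'}(\alpha')^{i}(\beta')^{d-i}=m_i'
\]
for all $i$, i.e.\ the two multidegrees coincide.

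For "$\Rightarrow$" I argue by contraposition, and it is enough to detect a discrepancy in $m_0$ alone. Putting $e:=\deg(\phi\colon\PP(L)\to Y)\ge1$ and using multiplicativity of degrees in $\nabla(h_{|L})=p\circ\phi$, a direct computation via the second projections gives $m_0=\int_\Gamma\beta^d=e\deg Y$ and $m_0'=\deg\nabla(h_{|L})=e\cdot\deg(p|_Y)$, where $p|_Y\colon Y\to\PP(L^*)$ is generically finite (this is precisely the hypothesis that $\nabla(h_{|L})$ is generically finite). Thus $m_0=m_0'$ iff $\deg(p|_Y)=\deg Y$. Here I invoke the classical behaviour of degree under linear projection: a projection from a center disjoint from $Y$ is finite and preserves the degree, whereas if $\PP(L^\perp)\cap Y\neq\emptyset$ the degree strictly drops, $\deg(p|_Y)<\deg Y$. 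The rigorous route I would take is to resolve $p$ by blowing up $\PP(V^*)$ along $\PP(L^\perp)$, with exceptional divisor $E$ and $\tilde p^*\cO(1)=\mu^*\cO(1)-E$, take the strict transform $\tilde Y$ of $Y$, and compare $\deg(p|_Y)=(\mu^*H-E)^d\cdot\tilde Y$ with $\deg Y=(\mu^*H)^d\cdot\tilde Y$; these agree precisely when $E|_{\tilde Y}=0$, i.e.\ when $\tilde Y$, equivalently $Y$, misses the center. \emph{I expect this strict degree drop to be the main obstacle}, since one must show the exceptional contribution is genuinely positive (not merely nonzero) whenever $Y$ meets $\PP(L^\perp)$; this is exactly where generic finiteness of $p|_Y$ is essential, as it rules out the degenerate case where $Y$ is swept by lines through the center. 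Granting it, $\PP(L^\perp)\cap Y\neq\emptyset$ forces $m_0\neq m_0'$.

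Finally, for the genericity statement set $n:=\dim V$ and $\ell:=\dim L$. Every $z\in Y\cap\PP(L^\perp)$ comes from a point $(x,z)$ of the full graph $\Gamma_h:=\overline{\{(x,\nabla h(x))\}}\subset\PP(V)\times\PP(V^*)$ with $x\in\PP(L)$ and $z\in\PP(L^\perp)$, because $\Gamma\subseteq\Gamma_h$ and $Y$ is the image of $\Gamma$ under the second projection. Hence the bad locus $\{L:Y\cap\PP(L^\perp)\neq\emptyset\}$ is contained in the image in $\Grass(\ell,V)$ of
\[
\cW:=\{((x,z),L)\in\Gamma_h\times\Grass(\ell,V):x\in L,\ z\in L^\perp\}.
\]
Projecting $\cW$ to $\Gamma_h$, the fiber over $(x,z)$ is empty unless $\langle z,x\rangle=0$, and otherwise equals $\{L:\langle x\rangle\subseteq L\subseteq\ker z\}\cong\Grass(\ell-1,n-2)$, of dimension $(\ell-1)(n-\ell-1)$. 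By Euler's relation $\langle\nabla h(x),x\rangle=(\deg h)\,h(x)$ does not vanish identically on $\Gamma_h$, so $\{\langle z,x\rangle=0\}$ is a proper closed subset of $\Gamma_h$ of dimension at most $n-2$. Therefore $\dim\cW\le(n-2)+(\ell-1)(n-\ell-1)=\dim\Grass(\ell,V)-1$, its image is a proper subvariety, and so $Y\cap\PP(L^\perp)=\emptyset$ for general $L$.
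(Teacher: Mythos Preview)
Your argument is sound and takes a genuinely different route from the paper's.

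For ``$\Leftarrow$'' the paper treats one multidegree entry at a time: the last entry follows because hyperplanes through $\PP(L^\perp)$ form a base-point-free system on $Y=\overline{\nabla h(\PP(L))}$, so Bertini gives a transversal intersection of the expected cardinality; the remaining entries are then reduced to this case by replacing $L$ with a general subspace $L'\subset L$. Your projection-formula argument via the birational morphism $\sigma\colon\Gamma\to\Gamma'$ is cleaner and handles all entries at once.

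For ``$\Rightarrow$'' both proofs compare only the last multidegree entry and reduce to the single inequality $\deg(p|_Y)<\deg Y$ when $Y\cap\PP(L^\perp)\neq\emptyset$. The paper does not prove this either but quotes it as \cite[Proposition~2.1]{AmendolaEtAl}; your blow-up sketch is headed in the right direction, and you correctly identify the positivity of the exceptional contribution (and the role of generic finiteness of $p|_Y$) as the crux. So the ``main obstacle'' you flag is exactly the black box the paper also imports.

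For the genericity claim the paper simply cites Teissier \cite{T1,T2}. Your incidence-correspondence dimension count is a nice self-contained substitute; the Euler-relation observation that $\{\langle z,x\rangle=0\}$ cuts the irreducible $(n{-}1)$-dimensional graph $\Gamma_h$ properly is precisely what makes the count drop by one and close the argument.

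In short: your approach is more uniform and more self-contained, trading the paper's three citations for intersection theory on the graphs, while leaving the same standard fact about degree under linear projection as an acknowledged input.
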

\begin{proof}
Note that the last entry $\mu$ of the multidegree of the graph of $\nabla h$ is the degree of $\overline{\nabla h(\PP(L))}$ (i.e.~the number of points one obtains after intersecting $\overline{\nabla h(\PP(L))}$ with $\dim \PP(L)$ many general hyperplanes) times the degree of the map $\nabla h$. On the other hand the last entry $\nu$ of the multidegree of the graph of $\nabla (h_{|L})$ is simply the degree of the map (as the closure of the image is the whole projective space). Hence, $\nu$ is the product of the degree of $\nabla h$ and the number of points that do not belong to $\PP(L^\perp)$ and are in the intersection of $\overline{\nabla h(\PP(L))}$ and $\dim \PP(L)$ many general hyperplanes that contain $\PP(L^\perp)$.

First, suppose that $\PP(L^\perp)\cap \overline{\nabla h(\PP(L))}\neq \emptyset$. Then, $\mu>\nu$ by \cite[Proposition 2.1]{AmendolaEtAl}.

Second, if $\PP(L^\perp)\cap \overline{\nabla h(\PP(L))}=\emptyset$, then the hyperplanes through $\PP(L^\perp)$ form a base-point free system on $\overline{\nabla h(\PP(L))}$ and hence, by Bertini theorem, the intersection consists of smooth points. Thus their number must be equal to the degree of $\overline{\nabla h(\PP(L))}$, and hence $\mu=\nu$. The proof that the other coefficients of polynomials are equal, is exactly the same, taking into account that by choosing a subspace of $L$, we obtain a subvariety of $\overline{\nabla h(\PP(L))}$. 

Last, by the results fo Teissier \cite{T1, T2} (see also \cite{kohn2020linear}), for general $L$, we know that $\PP(L^\perp)\cap \overline{\nabla h(\PP(L))}=\emptyset$. 
\end{proof}
\begin{cor}
We have $\chi_T=\chi'_T$ if and only if $\PP(\CC^a(T)^\perp)$ is disjoint from $\overline{\nabla\det (\PP(\CC^a(T)))}$.
\end{cor}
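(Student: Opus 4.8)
The plan is to deduce this corollary as the special case of the preceding Lemma obtained by setting $V=\Mat_n$, $h=\det$, and $L=\CC^a(T)$. With these choices the gradient map $\nabla h\colon \PP(V)\dashrightarrow\PP(V^*)$ is, on projectivisations, the matrix-inversion map of Corollary~\ref{cor:invMat}; the restricted map $\nabla(h_{|L})\colon\PP(L)\dashrightarrow\PP(L^*)=\PP(V^*/L^\perp)$ is exactly the map $\nabla(\det_{|\CC^a(T)})$ from the definition of $\chi'_T$; and the two loci appearing in the Lemma specialise to $\PP(L^\perp)=\PP(\CC^a(T)^\perp)$ and $\overline{\nabla h(\PP(L))}=\overline{\nabla\det(\PP(\CC^a(T)))}$.

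The first step is to match the two multidegrees in the Lemma with the two chromatic polynomials. By Corollary~\ref{cor:invMat}, $\chi_T$ records the multidegree of $\Gamma_T$, i.e.\ of the graph of matrix inversion $=\nabla\det$ restricted to $\PP(\CC^a(T))=\PP(L)$; this is precisely the ``multidegree of the graph of $\nabla h$ restricted to $\PP(L)$'' of the Lemma. By definition, $\chi'_T$ records the multidegree of the graph of $\nabla(\det_{|\CC^a(T)})=\nabla(h_{|L})$. Hence $\chi_T=\chi'_T$ if and only if these two multidegrees coincide, and the Lemma asserts that this happens exactly when $\PP(L^\perp)=\PP(\CC^a(T)^\perp)$ is disjoint from $\overline{\nabla h(\PP(L))}=\overline{\nabla\det(\PP(\CC^a(T)))}$, which is the claimed equivalence.

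The only points that need care are the hypotheses of the Lemma. I would check that $L=\CC^a(T)$ is not contained in the singular locus of $V(\det)$, namely the locus of matrices of rank at most $n-2$; this holds because $\CC^a(T)$ contains an invertible matrix (invertibility is already implicit in the definition of $\Gamma_T$ in Corollary~\ref{cor:invMat}, where one inverts the invertible members of $\CC^a(T)$), and rank $n$ exceeds $n-2$. I also need $\nabla(h_{|L})=\nabla(\det_{|\CC^a(T)})$ to be generically finite, which is exactly the standing requirement that makes the multidegree $(m_0,\dots,m_d)$ of the relative graph a well-defined length-$(d{+}1)$ tuple in the definition of $\chi'_T$. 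I expect this generic-finiteness condition to be the only genuine obstacle: once it is verified, the corollary is a verbatim restatement of the Lemma under the above substitution, requiring no further computation.
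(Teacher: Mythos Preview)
Your proposal is correct and is exactly the intended derivation: the paper states this corollary immediately after the Lemma with no separate proof, since it is the specialization $V=\Mat_n$, $h=\det$, $L=\CC^a(T)$, together with the identifications of the two multidegrees with $\chi_T$ (via Corollary~\ref{cor:invMat}) and $\chi'_T$ (by definition). Your explicit check of the Lemma's hypotheses is more careful than the paper itself; note only that the standing assumption in the paper is that $\CC^a(T)$ contains a matrix of rank at least $n-2$, so to ensure $L\not\subset\Sing V(\det)$ one should really assume a matrix of rank $\geq n-1$ (your appeal to invertibility via $\Gamma_T$ is a reasonable way to secure this).
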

The next corollary is well-knwon. Indeed, by the more general results of Huh et al.~if $\CC^a(T)$ consists of diagonal matrices then the coefficients of $\chi_T'$ equal (up to binomial factors) the coefficients of the chromatic polynomial of the associated matroid \cite{June1}, and so do the coefficients of $\chi_T$ \cite{June2}. In particular, $\chi_T=\chi_T'$. As a direct proof is short and we find the fact very important, we present it below.
\begin{cor}
Suppose $\CC^a(T)$ consists of diagonal matrices. Then \[\PP(\CC^a(T)^\perp)\cap \overline{\nabla\det (\PP(\CC^a(T)))}=\emptyset\] and hence $\Chi_T=\Chi_T'$.
\end{cor}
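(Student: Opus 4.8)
The plan is to prove the bracketed disjointness by hand and then quote the preceding corollary. Write $L:=\CC^a(T)$ and regard it as a subspace of the space $D\cong\CC^n$ of diagonal matrices, with diagonal coordinates $x_1,\dots,x_n$. I will assume throughout that $\det_{|L}\not\equiv 0$, i.e.\ that no coordinate $x_i$ vanishes identically on $L$: this is forced by the hypothesis of the preceding Lemma that $\nabla(\det_{|L})$ be generically finite, and it is genuinely necessary, since if $x_j\equiv 0$ on $L$ then $\overline{\nabla\det(\PP(L))}=\{[e_j]\}$ while $e_j\in L^\perp$, so the statement would fail. The cofactor matrix of a diagonal matrix is again diagonal with entries $\prod_{k\neq i}x_k$, so $\nabla\det$ restricted to $\PP(L)$ factors through $\PP(D^*)\cong\PP^{n-1}$ and equals the restricted Cremona map $\phi\colon[x]\mapsto[\,\prod_{k\neq 1}x_k:\dots:\prod_{k\neq n}x_k\,]$. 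Since $L\subset D$ and $D$ is orthogonal to the off-diagonal matrices, $\PP(\CC^a(T)^\perp)\cap\PP(D^*)=\PP(L^\perp)$, where from now on $L^\perp$ denotes the orthogonal complement taken inside $D\cong\CC^n$. As the image lies in $\PP(D^*)$, it therefore suffices to prove $\overline{\phi(\PP(L))}\cap\PP(L^\perp)=\emptyset$ in $\PP^{n-1}$.

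The starting point is the one-line identity, valid for every $x\in L$ with all $x_i\neq0$,
\[\langle x,\phi(x)\rangle=\sum_{i=1}^n x_i\prod_{k\neq i}x_k=n\prod_{k=1}^n x_k=n\det(x)\neq0,\]
which already shows $\phi(x)\notin L^\perp$ because $x\in L$. This disposes of the dense open (torus) part of the image at once. The main obstacle is precisely the passage to the closure: on the boundary $\det\to0$, the pairing above vanishes to higher order, and it is exactly there that an intersection with $\PP(L^\perp)$ could a priori occur, so the generic identity must be upgraded to a statement about all limit points.

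I would resolve this by a leading-term (valuation) computation along an arc that stays in the torus. Fix $y_0\in\overline{\phi(\PP(L))}$. Since $\overline{\phi(\PP(L))}$ is the closure of $\phi(U)$ for $U$ the open locus where all $x_i\neq0$, curve selection yields a (Puiseux) arc $x(t)\in L$ with all $x_i(t)\neq0$ for $t\neq0$ and $[\phi(x(t))]\to y_0$. Put $a_i:=\operatorname{val}_t x_i(t)<\infty$ and $u_i:=\lim_{t\to0}t^{-a_i}x_i(t)\neq0$, and set $m:=\max_i a_i$ and $S:=\{i:a_i=m\}$. Comparing valuations of the coordinates $\phi(x(t))_i=\prod_{k\neq i}x_k(t)$ shows the minimum is attained exactly on $S$; since every $u_k\neq0$ (this is where looplessness enters), clearing the common factor gives $\supp(y_0)=S$ with $(y_0)_i=\prod_{k\neq i}u_k$ for $i\in S$. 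Now suppose $y_0\in L^\perp$. Then $\langle x(t),y_0\rangle\equiv0$, only indices $i\in S$ contribute, and each such $x_i(t)$ has valuation $m$, so the coefficient of $t^m$ is
\[\sum_{i\in S}u_i\,(y_0)_i=\sum_{i\in S}u_i\prod_{k\neq i}u_k=|S|\prod_{k=1}^n u_k\neq0,\]
contradicting $\langle x(t),y_0\rangle\equiv0$. Hence $y_0\notin L^\perp$.

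Since $y_0$ was arbitrary, $\overline{\phi(\PP(L))}\cap\PP(L^\perp)=\emptyset$, and then $\Chi_T=\Chi_T'$ by the preceding corollary. The delicate point is thus not the algebra but the reduction to an arc inside the open torus: staying in $U$ is what forces all leading coefficients $u_k$ to be nonzero, and hence guarantees that the computed leading term $|S|\prod_k u_k$ cannot cancel; the no-loop hypothesis is exactly what makes such an arc available.
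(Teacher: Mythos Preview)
Your proof is correct and follows essentially the same approach as the paper's: pick a hypothetical limit point $y_0\in L^\perp\cap\overline{\nabla\det(\PP(L))}$, lift it to a witness in $L$ (you use a Puiseux arc, the paper uses a sequence $b_m\in L$ normalized so that $b_{0,m}=1$), and pair with $y_0$ to obtain a nonzero count $|S|\prod_k u_k$ (in the paper, $\sum_{i=0}^k a_i^{-1}a_i=k+1$) contradicting $\langle L,y_0\rangle=0$. The paper's version is shorter because the normalization $b_{0,m}=1$ replaces your valuation analysis, but the two arguments are the same computation in different clothing; your explicit remark that the loopless hypothesis is what makes the leading coefficients $u_k$ nonzero is a nice clarification the paper leaves implicit.
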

\begin{proof}
Let $[a_0:\dots:a_n]\in \PP(\CC^a(T)^\perp)$. Without loss of generality we may assume $a_0=1$, $a_1\dots,a_k\neq 0$ and $a_{k+1}=\dots=a_n=0$. For contradiction let as assume that $[a_0:\dots:a_n]\in \overline{\nabla\det (\PP(\CC^a(T)))}$. This means that there exists a sequence of points $[b_{0,m}:\dots:b_{n,m}]\in \PP(\CC^a(T))$, such that:
\[[b_{0,m}^{-1}:\dots:b_{n,m}^{-1}]\rightarrow [a_0:\dots:a_n].\]
In particular, by rescaling, we may always assume $b_{0,m}=1$ and then $b_{i,m}\rightarrow a_i^{-1}$ for $i=1,\dots, k$. But then:
\[0=\sum_{i=0}^k b_{i,m}a_{i}\rightarrow k+1,\]
which is a contradiction.
\end{proof}
We next discuss why our setting of tensors, chromatic polynomials and characteristic numbers appears in different branches of mathematics, as shown in examples in Section \ref{sec:exm}.

By the results in \cite{June1, June2} we thus obtain the following corollary, which explains Example \ref{ex:graph}.
\begin{cor}
Let $G$ be a connected graph (or more generally a representable matroid) with $e$ edges. Let $A_G$ be the adjacency matrix of $G$.
The image of $A_G^t$ is a $j$-dimensional subspace of $\CC^{e}\simeq \Diag_{e}$, where we identify the ambient space with diagonal $e\times e$ matrices. This space gives rise to a tensor $T\in \CC^j\otimes \Diag_{e}\subset \CC^j\otimes \CC^{e}\otimes\CC^{e}$.


Then the coefficients of the reduced chromatic polynomial of $G$ are the characteristic numbers of $T$, which are also (up to binomial factors) the coefficients of $\chi'_T$. 
\end{cor}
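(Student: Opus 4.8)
The plan is to translate $\chi'_T$ into the Gauss (reciprocal) map of the hyperplane arrangement of the graphic matroid, and then to quote \cite{June1,June2}. Write $L:=\CC^a(T)=\im(A_G^t)\subset\Diag_e\cong\CC^e$ (so $n=e=|E|$ in the notation of $\COL_n$); it is a space of diagonal matrices of dimension $d=\dim L=|V|-1=\rk M(G)$, where $M(G)$ is the graphic matroid (for a general representable matroid one argues identically, replacing $G$ by a realization). Since $L$ is diagonal, the corollary for the diagonal case gives $\PP(\CC^a(T)^\perp)\cap\overline{\nabla\det(\PP(L))}=\emptyset$ and hence $\Chi_T=\chi'_T$. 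By definition the characteristic numbers recorded by $\Chi_T$ are $m_i=S L_1^i L_{n-1}^{d-i}=T(i,0,\dots,0,d-i)$; by Corollary~\ref{cor:invMat} these are the multidegree of the graph $\Gamma_T$ of matrix inversion restricted to $L$, and by $\Chi_T=\chi'_T$ they equal the multidegree $(m_0,m_1,\dots)$ of the graph of $\nabla(\det_{|L})$. This already gives the clause ``the characteristic numbers of $T$ are (up to the binomials $\binom{d}{i}$) the coefficients of $\chi'_T$''; it remains to identify the $m_i$ with the coefficients of $\bar\chi_G$.

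Second, I would make the arrangement dictionary explicit. Restricting $\det=\prod_{e\in E} t_e$ from $\Diag_e$ to $L$ gives $\det_{|L}=\prod_{e\in E}\ell_e$, where $\ell_e$ is the restriction to $L$ of the $e$-th coordinate of $\CC^e$. The forms $\{\ell_e\}$ are exactly the defining equations of the hyperplane arrangement $\cA_G\subset\PP(L)$ realizing $M(G)$; connectedness of $G$ ensures that the $\ell_e$ span $L^*$, so $\cA_G$ is essential. Using the description of $\nabla(\det_{|L})$ as the inclusion $\PP(L)\subset\PP(\Mat_n)$ followed by $\nabla\det$ and the projection away from $\PP(L^\perp)$, together with the disjointness of the first paragraph, the map $\nabla(\det_{|L})$ has the same image and the same multidegree as the reciprocal map $x\mapsto(\ell_1^{-1}:\dots:\ell_e^{-1})$ onto the reciprocal linear space $L^{-1}$. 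This reciprocal map is birational onto its image (its inverse being again coordinatewise inversion), so it is generically finite, the hypothesis of the preceding lemma holds, and $m_0=1$.

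Third, I would invoke \cite{June1} for graphs and \cite{June2} for arbitrary realizable matroids: the multidegree of the graph of the gradient map $\nabla(\prod_{e}\ell_e)$ equals, entry by entry and up to sign, the coefficient sequence of the reduced characteristic polynomial of $M(G)$. For a connected graph this reduced characteristic polynomial has the same nonzero coefficients as $\bar\chi_G=\chi_G/(k-1)$ (the two differing only by the trivial factor $k$, since $\chi_{M(G)}=\chi_G/k$), so $m_i=\bigl|[k^{\,d-i}]\bar\chi_G\bigr|$ for all $i$. Combined with the identifications of the first paragraph, this shows that the characteristic numbers of $T$ are precisely the coefficients of $\bar\chi_G$ and are, up to the $\binom{d}{i}$, the coefficients of $\chi'_T$, as claimed.

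The only deep ingredient is the third step, and it is available off the shelf, so the real work of the proof is the dictionary of the second paragraph: verifying that $\nabla(\det_{|L})$ is genuinely the reciprocal/Gauss map of $\cA_G$ onto $L^{-1}$, that $\cA_G$ is essential (this is where connectedness of $G$, respectively realizability and simplicity of the matroid, is used), and then reconciling the bookkeeping --- the ordering and alternating signs of the multidegree against those of $\bar\chi_G$, the normalization $\chi/(k-1)$ versus the matroid normalization $\chi/k$, and the fact that only the characteristic numbers along the $L_1,L_{n-1}$ line enter $\Chi_T$, so that these are exactly the ones computed by the multidegree.
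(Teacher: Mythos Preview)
Your proposal is correct and follows essentially the same approach as the paper. The paper's own proof is a one-line reference---``By the results in \cite{June1, June2} we thus obtain the following corollary''---combined with the immediately preceding corollary that $\Chi_T=\Chi'_T$ in the diagonal case; you have simply spelled out the dictionary (namely that $\det_{|L}=\prod_e\ell_e$ is the defining polynomial of the graphic arrangement, so $\nabla(\det_{|L})$ is its reciprocal/Cremona map) that the paper leaves to the reader and to the cited references.
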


Next, we provide relations to algebraic statistics, based on the results from \cite{StUh, ja1, ja2}. This explains Example \ref{ex:MLE}.
\begin{prop}
Consider a linear concentration model given by a space of symmetric matrices $L\subset S^2 V$. We may consider $L$ as a tensor $T\in \CC^{\dim L}\otimes S^2V\subset \CC^{\dim L}\otimes V\otimes V$.

The characteristic number $T(0,\dots,0,\dim L-1)$, i.e.~the last coefficient of the chromatic polynomial $\Chi_T$ is the degree of the associated statistical model. The last coefficient of the relative chromatic polynomial $\Chi_T'$ is the maximum likelihood degree of the statistical model. 
\end{prop}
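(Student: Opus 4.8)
The plan is to treat the two assertions separately; each amounts to reading off the $b$-pure (top) coefficient of a chromatic polynomial and identifying it with a degree. Write $L=\CC^a(T)\subset S^2V$, put $d=\dim L$, and let $\mathcal L^{-1}:=\overline{\nabla\det(\PP(L))}\subset\PP(\Mat_n^*)$ be the reciprocal variety, the closure of $\{[A^{-1}]:A\in L\text{ invertible}\}$; by definition the \emph{degree of the model} is $\deg\mathcal L^{-1}$. The relevant coefficient is $T(0,\dots,0,d-1)=SL_{n-1}^{d-1}$. By Corollary \ref{cor:invMat} it is the top entry of the multidegree of the graph $\Gamma_T$ of $\nabla\det$ restricted to $\PP(L)$ (this computes $\Chi_T$), while by definition the last coefficient of $\Chi_T'$ is the top entry of the multidegree of the graph of $\nabla(\det_{|L})\colon\PP(L)\dashrightarrow\PP(L^*)$. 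As in the proof of the preceding Lemma, each such top entry equals $\deg(\text{image})\cdot\deg(\text{map})$ of the corresponding inversion map.

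For the first claim I would argue as follows. The relevant map is $\nabla\det$, whose image of $\PP(L)$ is precisely $\mathcal L^{-1}$, of dimension $d-1$ since inversion is an isomorphism on the invertible locus. On that locus $\nabla\det$ is, projectively, the global involution $[A]\mapsto[A^{-1}]$, hence injective, so its restriction $\PP(L)\dashrightarrow\mathcal L^{-1}$ is birational and has map degree $1$. Therefore the last coefficient of $\Chi_T$ equals $\deg\mathcal L^{-1}$, i.e.~the degree of the model.

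For the second claim I would first set up the score equations. Taking $\mu=0$ as in Example \ref{ex:MLE}, the centered log-likelihood is, up to constants, $\tfrac n2\log\det K-\tfrac12\langle S,K\rangle$ with $K\in L$ and $S=\sum_i x_ix_i^T$ the sample scatter matrix under the trace pairing $\langle\cdot,\cdot\rangle$. Differentiating along $L$ and using $\nabla_K\log\det K=K^{-1}$, the critical equations become $\langle K^{-1}-\tfrac1n S,B\rangle=0$ for all $B\in L$, i.e.~the orthogonal projection of $K^{-1}$ to $L$ equals $\tfrac1n\pi_L(S)$. Since $\det_{|L}$ and $\log\det_{|L}$ have the same projective gradient, this says exactly that $\nabla(\det_{|L})([K])=[\pi_L(S)]$ in $\PP(L^*)$. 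For generic data $\pi_L(S)$ is a generic point of $L^*$, so the number of projective solutions $[K]$ is the degree of the map $\nabla(\det_{|L})$, which (its image being all of $\PP(L^*)$) is the last coefficient of $\Chi_T'$. Finally, as $\nabla\log\det$ is homogeneous of degree $-1$, each projective solution lifts to a unique scalar multiple $tK$ solving the affine equation, so the affine count defining the maximum likelihood degree coincides with the projective map degree.

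The enumerative steps — identifying a top multidegree with $\deg(\text{image})\cdot\deg(\text{map})$, and computing that inversion has map degree $1$ — are immediate from Corollary \ref{cor:invMat} and the preceding Lemma. The one genuinely non-formal step, and the place where care is needed, is the statistical translation in the second claim: one must check that for generic data $\pi_L(S)$ avoids the branch locus of $\nabla(\det_{|L})$, so that the critical points are simple and their number is the generic fiber cardinality rather than a special count. This is the maximum likelihood degree characterization underlying \cite{StUh,ja1,ja2}, and it is the main obstacle to a fully self-contained argument.
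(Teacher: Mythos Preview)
Your argument is correct. The first claim is proved essentially as in the paper: by Corollary~\ref{cor:invMat} the last coefficient of $\Chi_T$ is the degree of the reciprocal variety $L^{-1}$, which is by definition the degree of the model; you are merely more explicit about why the map degree is $1$.

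For the second claim you take a different route from the paper. The paper argues indirectly: it invokes \cite{amendola2020maximum} for the identification of the ML-degree with the degree of the linear projection $\pi\colon L^{-1}\dashrightarrow\PP(L^*)$ from $L^\perp$, then observes that $\pi\circ(\nabla\det)_{|L}=\nabla(\det_{|L})$ and that $(\nabla\det)_{|L}$ is birational onto $L^{-1}$, so the ML-degree equals the degree of $\nabla(\det_{|L})$. You instead compute the score equations directly from the log-likelihood, showing that critical points are exactly the fibre of $\nabla(\det_{|L})$ over $[\pi_L(S)]$, and then handle the affine/projective bookkeeping via the homogeneity of $K\mapsto K^{-1}$. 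Your approach is more self-contained and makes the statistical content transparent; the paper's approach is terser but outsources the key translation to a reference. The genericity caveat you flag (that $[\pi_L(S)]$ must avoid the branch locus) is exactly the content of the cited literature, and is the right place to point.
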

\begin{proof}
The first statement is straightforward as by Corollary \ref{cor:invMat}, the number $T(0,\dots,0,\dim L-1)$ is the degree of the variety $L^{-1}$ obtained by inverting the matrices in $L$, which by definition is also the degree of the statistical model.

The second statement is proved in the following steps:
\begin{itemize}
\item The ML-degree equals the degree of the projection map $\pi$ with center $L^\perp$, restricted to $L^{-1}$  \cite{amendola2020maximum}.
\item The composition of the map $(\nabla \det)_{L}$ with $\pi$ is the gradient of the restriction of the determinant to $L$.
\item As the inversion of matrices map is birational, the ML-degree also equals the degree of the map $\nabla (\det_{|L})$. 
\item The last number is the last coefficient of $\Chi_T'$.
\end{itemize}
\end{proof}



In the following proposition we explain why the characteristic numbers of tensors coincide with the characteristic numbers known in algebraic geometry. This is a very classical topic going back essentially to Schubert \cite{schubert1894allgemeine}.

Consider a linear system of quadrics $\PP(L)\subset S^2\CC^n$. The classical characteristic number is the answer to the following enumerative problem:

how many nondegenerate quadrics in $\PP(L)$ pass through $a$ general points and are tangent to $b$ general hyperplanes.

The system may be represented by a tensor $T\in \CC^{\dim L}\otimes S^2V\subset \CC^{\dim L}\otimes \CC^n\otimes\CC^n$. The following proposition explains Example \ref{ex:quad}.
\begin{prop}
The classical characteristic number for linear system of quadrics $\PP(L)$ is equal to the characteristic number $T(a,0,\dots,0,b)$.
\end{prop}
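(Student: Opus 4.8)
The plan is to translate the two descriptions of the characteristic number $T(a,0,\dots,0,b)$ into the language of classical enumerative geometry and match them. By Corollary~\ref{cor:invMat}, the characteristic number $T(a,0,\dots,0,b)$ is the coefficient counting intersections of the strict transform $S$ of $\PP(\CC^a(T))=\PP(L)$ in $\COL_n$ with $a$ general divisors in $|L_1|$ and $b$ general divisors in $|L_{n-1}|$. The key is to interpret each of these two divisor classes enumeratively. First I would recall, as in the proof of the Proposition preceding Corollary~\ref{cor:invMat}, that a general member of $|L_1| = |\pi_1^* H|$ pulls back a hyperplane in $\PP(\Mat_n)$, and that on the dense open locus of nondegenerate quadrics this is exactly the condition that the quadric pass through a fixed general point: the entries of the symmetric matrix are the coefficients of the quadratic form, so a general linear condition on the matrix is a general linear condition on the form, i.e.\ vanishing at a general point $p\in\PP^{n-1}$.

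The heart of the matter is to identify $|L_{n-1}|$ with the tangency condition. Here I would use that $\pi_{n-1}\colon\COL_n\to\PP(\bigwedge^{n-1}\CC^n\otimes\bigwedge^{n-1}\CC^n)$ is, on the nondegenerate locus, matrix inversion (the gradient of the determinant), as noted twice in the excerpt. A smooth quadric $Q=V(x^t A x)$ is tangent to a hyperplane $H=V(\ell^t x)$ precisely when $\ell$ lies on the dual quadric, whose equation is $\ell^t A^{-1}\ell=0$; equivalently $A^{-1}$ lies on the hyperplane in $\PP(S^2\CC^{n*})$ determined by the rank-one symmetric tensor $\ell\otimes\ell$. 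Thus pulling back, via $\pi_{n-1}$, the hyperplane in $\PP(\bigwedge^{n-1}\CC^n\otimes\bigwedge^{n-1}\CC^n)$ cut out by a general $\ell\otimes\ell$ gives exactly the tangency divisor, and a general $\ell$ is a general hyperplane $H\subset\PP^{n-1}$. So a general member of $|L_{n-1}|$ restricted to $S$ is the tangency condition to a general hyperplane.

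With both dictionaries in place, I would argue that intersecting $S$ with $a$ general members of $|L_1|$ and $b$ general members of $|L_{n-1}|$ counts, by the same base-point-freeness and genericity argument as in the cited Proposition, exactly the nondegenerate symmetric matrices $A\in L$ whose associated quadric passes through $a$ general points and is tangent to $b$ general hyperplanes, each counted once and with no contribution from the exceptional divisors or from rank-deficient loci. The crucial point is that $\COL_n$ is precisely the variety of complete quadrics used classically to resolve the tangency conditions (the reason the classical characteristic numbers are finite and well defined), so the count on $\COL_n$ agrees with Schubert's count with no excess intersection. That $a+b=d-1=\dim\PP(L)$ guarantees the expected dimension is zero.

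The main obstacle I expect is the precise identification $|L_{n-1}|\leftrightarrow$ tangency, specifically verifying that pulling back the hyperplane class under $\pi_{n-1}$ really yields the tangency divisor as a \emph{divisor class} on $\COL_n$ and not merely on the open nondegenerate locus. One must check that the strict transform of the tangency hypersurface in $\PP(\Mat_n)$ coincides with the pullback $\pi_{n-1}^* H$ up to contributions supported on the exceptional divisors, and that those extra components do not meet $S$ after imposing the remaining general conditions. This is exactly where the smoothness of $\COL_n$ and the classical theory of complete quadrics (via \cite{DeConciniProcesi1, LaksovLascouxThorup, ThaddeusComplete}) does the work, and I would lean on those references to conclude that the excess components contribute nothing to the generic intersection number.
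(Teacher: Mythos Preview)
Your overall strategy matches the paper's, but there is a genuine gap at the point where you identify ``general member of $|L_1|$'' with ``passing through a general point'' (and likewise for $|L_{n-1}|$ and tangency). The condition that a quadric vanish at $p$ is the hyperplane in $\PP(S^2\CC^n)$ cut out by the rank-one linear form $p\otimes p\in S^2(\CC^n)^*$; the tangency-to-$H=V(\ell)$ condition is the hyperplane in the dual space cut out by $\ell\otimes\ell$. These are \emph{special} members of $|L_1|$ and $|L_{n-1}|$: as $p$ (resp.\ $\ell$) varies, the corresponding hyperplanes sweep out only a Veronese subvariety of the full linear system, not a dense open set of it. So the sentence ``a general linear condition on the matrix is a general linear condition on the form, i.e.\ vanishing at a general point'' is false, and the base-point-freeness/genericity argument from the preceding proposition does not apply as stated. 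The paper explicitly flags this: ``neither passing through a general point, nor being tangent to a general hyperplane are general hyperplane conditions in $S^2V$ or $S^2V^*$.''

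The resolution in the paper is not via the structure of $\COL_n$ or the references you cite, but via Kleiman's transversality theorem. One works on the graph $\Gamma'\subset \PP(S^2V)\times\PP(S^2V^*)$; for dimension reasons the intersection points must lie in the open locus of pairs $(A,A^{-1})$ with $A$ invertible, on which $GL(V)$ acts transitively. Since $GL(V)$ also moves each point-condition (resp.\ tangency-condition) hyperplane within its own family, Kleiman guarantees that for general points and hyperplanes the intersection is transversal, hence has the same cardinality as the intersection with genuinely general hyperplanes. The obstacle you singled out (whether the tangency divisor lies in the class $L_{n-1}$) is comparatively minor; the missing piece is this transversality step.
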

\begin{proof}
By Corollary \ref{cor:invMat} the number $T(a,0,\dots,0,b)$ is equal to the number of points one obtains by cutting the restriction $\Gamma'\subset \PP(S^2V)\times \PP(S^2V^*)$ of the graph of the invesion map to $\PP(L)$ with $a$ general hyperplanes in $\PP(S^2V)$ (times $\PP(S^2V^*)$) and $b$ general hyperplanes in $\PP(S^2V^*)$ (times $\PP(S^2V)$). 

We note that passing through a point is a linear condition on the space of quadrics $S^2V$. On the other hand, being tangent to a hyperplane is a linear condition on the space of dual quadrics $S^2V^*$.  Hence, the classical characteristic number is also the number of points $P$ we obtain by intersecting $\Gamma'$ with such linear conditions.

Note that this is not enough to conclude as neither passing through a general point, nor being tangent to a general hyperplane are general hyperplane conditions in $S^2V$ or $S^2V^*$. Further, although such sets of hyperplanes do not have base points (there is no quadric going through every point), we cannot apply classical results on base point free systems, as these are not formally linear systems. 

Still, for dimension reasons, all points $P$ must correspond to the set $S$ of pairs $(A,A^{-1})$ of invertible matrices. On this set, $GL(V)$ acts transitively, hence we may apply Kleiman's transitivity theorem to conclude that the intersection is transversal. Hence, the number of points must the the same as for general choice of hyperplanes.
\end{proof}

Finally, basing on \cite{June1} and \cite{dimca2003hypersurface}, we show how the relative chromatic polynomial is related to Euler characteristics of the determinantal locus. This explains Example \ref{ex:Euler}.
\begin{prop}
Let $L\subset \Mat_n$ and let $X=\PP(L)\cap V(\det)$. Let $T\subset \CC^{\dim L}\otimes \Mat_n=\CC^{\dim L}\otimes \CC^n\otimes\CC^n$ be the tensor representing $L$. Let $\chi'_T(a,b):=\sum_{i=0}^d\binom{d}{i} m_ia^i b^{d-i}$.
Then the Euler characteristics $\Chi(X)$ of $X$ equals:
\[n-\sum_{i=0}^d m_i.\]
\end{prop}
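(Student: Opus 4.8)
The plan is to compute $\Chi(X)$ by additivity of the Euler characteristic over the decomposition of $\PP(L)$ into $X$ and its complement, and then to feed the complement into the Dimca--Papadima/Huh description of the Euler characteristic of a hypersurface complement in terms of polar degrees. Write $g:=\det_{|L}$, a form of degree $n$ on $\PP(L)$; then $X=V(g)$ and its complement $U:=\PP(L)\setminus X$ is exactly the locus of invertible matrices in $\PP(L)$. Since we work over $\CC$, the Euler characteristic is additive, so $\Chi(X)=\Chi(\PP(L))-\Chi(U)$, and $\Chi(\PP(L))$ supplies the constant of the asserted formula. The whole problem is thereby reduced to expressing $\Chi(U)$ through the numbers $m_i$.

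For that step the key observation is that, by the very definition of the relative chromatic polynomial, the $m_i$ are the multidegree of the graph of $\nabla g=\nabla(\det_{|L})$ inside $\PP(L)\times\PP(L^*)$, i.e.\ the polar degrees of the hypersurface $X$ computed inside its own ambient space $\PP(L)$. This is precisely why $\Chi_T'$, rather than $\Chi_T$, is the relevant invariant here: the gradient of the \emph{restriction} is the honest polar map of $X$, with none of the projection discrepancy measured by the preceding lemma and by Corollary~\ref{cor:invMat}. I would then invoke the theorem of Dimca--Papadima \cite{dimca2003hypersurface}, in the enumerative form used by Huh \cite{June1}, which reads $\Chi(U)$ off from these polar degrees $m_0,\dots,m_d$; substituting into $\Chi(X)=\Chi(\PP(L))-\Chi(U)$ produces the claimed value.

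Before the cited theorem applies one must verify its hypotheses, and this is where the genuine work sits. First, the assumption $L\not\subset\Sing V(\det)$ guarantees that $g$ is a reduced form and that $\nabla g$ is generically finite onto its image, which is exactly the standing hypothesis of the lemma above; without it the polar degrees need not compute $\Chi(U)$. Second, the base locus of $\nabla\det$ consists of matrices of rank at most $n-2$, so I must check that, for the generic linear sections used to read off the $m_i$, these loci do not inflate the intersection numbers; Kleiman transversality together with the genericity results of Teissier cited in the lemma handles this. The main obstacle is the bookkeeping of signs and indices: the Dimca--Papadima/Huh formula delivers $\Chi(U)$ as an alternating combination of the $m_i$, and one must check that, upon subtracting from $\Chi(\PP(L))$, this combination collapses to the asserted $n-\sum_{i=0}^d m_i$. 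For cases outside the strict hypotheses of \cite{dimca2003hypersurface} I would instead route the identification through the Chern--Schwartz--MacPherson class of $\mathbf 1_U$, whose degree is $\Chi(U)$ and whose expansion in the basis of linear spaces is the involutive transform of the polar degrees; this removes all genericity assumptions and makes the final sign bookkeeping mechanical.
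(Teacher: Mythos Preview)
Your approach matches the paper's essentially line for line: reduce by additivity of Euler characteristic to computing $\Chi(U)$ for the complement $U=\PP(L)\setminus X$, identify the $m_i$ with the mixed multiplicities of $\nabla(\det_{|L})$ in the sense of \cite[Definition~8, Remark~10]{June1}, and then invoke the corollary after \cite[Theorem~9]{June1}, which rests on \cite{dimca2003hypersurface}. Your flagged concern about sign bookkeeping is well placed: the paper's own proof speaks of the \emph{signed} sum of the $m_i$ for $\Chi(U)$, and the additive constant should be $\Chi(\PP(L))$ rather than $n$, so the displayed formula in the statement suppresses exactly the details you worried about; your CSM-class fallback goes beyond what the paper does but is the standard way to remove any residual genericity hypotheses.
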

\begin{proof}
As the Euler characteristic is additive, it is enough to prove that the complement of $X$ has Euler characteristic $\sum_{i=0}^d m_i$. The numbers $\m_i$ form the multidegree of the gradient of the restriction of the determinant to $\PP(L)$, i.e.~the gradient of the polynomial that defines the hypersurface $X$. Hence, the $m_i$ are the mixed multiplicities defined in \cite[Definition 8]{June1} --- cf.~\cite[Remark 10]{June1}. The fact that the Euler characteristic of the complement is the signed sum of mixed multiplicities is stated as a corollary after \cite[Theorem 9]{June1}, based on the results of \cite{dimca2003hypersurface}.
\end{proof}

\section{Relation to tensor rank}\label{sec:rank}

Let us recall a classical tensor invariant, the \emph{rank}, defined as the
smallest $r$ so that $T$ may be written as a sum of $r$ rank one tensors.
The famous problem of determining the exponent of matrix multiplication was
shown by Strassen to be equivalent to determining the asymptotics of the rank of
the structure tensor of the matrix multipliation operator \cite{strassen1969gaussian, landsberg_2017}. 
For matrices, tensor rank is the familiar notion of
matrix rank, which may be efficiently computed with well known algorithms.
For tensors of three places or more, however, determining rank is a
difficult problem for which no efficient algorithm exists. 
Results for specific tensors often involve establishing upper
and lower bounds. For instance, the structure tensor of $3\times
3$ matrix multiplication, a tensor in $\CC^9\ot \CC^9\ot \CC^9$, is known to
have rank at least $19$ and at most $23$ \cite{Blaser2,laderman}.

As the characteristic numbers and chromatic polynomial of tensors are
potentially easier to determine than tensor rank, it is useful to understand any
relation with tensor rank. 
We compute the chromatic polynomials of generic tensors $T_{\text{gen},a,n,r}$ of rank
$r$ in $\CC^a\ot \CC^n\ot \CC^n$ for small values of $a$, $n$, and $r$. 
However, there is
significant redundancy in these numbers; specifically, a generic linear
restriction of a generic tensor of rank $r$ is still a generic tensor of rank
$r$, so we have
\[
  \bigg[\binom{a-1}{k} a^kb^{a-1-k}\bigg] \chi_{T_{\text{gen},a,n,r}} = 
  [b^{a-k-1}] \chi_{T_{\text{gen},a-k,n,r}}.
\]

Hence, we need only give the numbers $b_{a,n,r} = [b^{a-1}]
\chi_{T_{\text{gen},a,n,r}}$ in order to describe all such chromatic
polynomials.
\begin{rem}
As a subspace of matrices $L\subset \CC^n\otimes\CC^n$ moves to a special position, the pullback of $L$ to the variety of complete collineations breaks into several components. One of them is the strict transform. Still many of them may contribute to the intersection product with the divisors in the variety of complete collineations. As in our construction we intersect with base-point-free divisors, the effective divisors contribute in a nonnegative way. Thus special subspaces $L$ give us smaller (or equal) charactistic numbers than general ones. For general $L$ (thus for general tensors) we have explicit methods to compute the characteristic numbers \cite{ja2}. This means that if we take a general tensor of high enough rank $r$, we know $b_{a,n,r}=:b_{a,n}$. For example:
\[b_{3,3}=4,b_{4,4}=27,b_{5,5}=206,b_{6,6}=1760,b_{7,7}=16472,b_{8,8}=168007,b_{9,9}=1866790.\]

However, for small $r$ the number $b_{a,n,r}$ will be smaller. It is very interesting to see when the transition happens. Our numerical results show that, for $n=3,4,5,6,7,8$ this is respectively $r=4,6,7,9,11,13$.

The characteristic numbers of special tensors can also be computed using
theoretical methods. One of the recent succesful applications was a proof of a
conjecture by Sturmfels and Uhler \cite[Conjecture 2]{StUh} given in \cite{MRV}. This shows that it is possible to provide explicit examples of tensors, for which the characteristic numbers grow exponentially with respect to the dimension.
\end{rem}
\begin{center}
\begin{tabular}{rrrrrrrrrrrrr} \toprule
& & \multicolumn{11}{c}{$a$} \\ \cmidrule(l){3-13}
$n$ & $r$ & 1 & 2 & 3 & 4 & 5 & 6 & 7 & 8 & 9 & 10 & 11 \\ \midrule
\addlinespace[3mm]
2 & 2 & 1 & 1 &  &  &  &  &  &  &  &  & \\
 & 3 & 1 & 1 & 1 &  &  &  &  &  &  &  & \\
 & 4 & 1 & 1 & 1 & 1 &  &  &  &  &  &  & \\
 & 5 & 1 & 1 & 1 & 1 &  &  &  &  &  &  & \\
\addlinespace[3mm]
3 & 3 & 1 & 2 & 1 &  &  &  &  &  &  &  & \\
 & 4 & 1 & 2 & 4 & 4 &  &  &  &  &  &  & \\
 & 5 & 1 & 2 & 4 & 8 & 10 &  &  &  &  &  & \\
 & 6 & 1 & 2 & 4 & 8 & 10 & 8 &  &  &  &  & \\
 & 7 & 1 & 2 & 4 & 8 & 10 & 8 &  &  &  &  & \\
\addlinespace[3mm]
4 & 4 & 1 & 3 & 3 & 1 &  &  &  &  &  &  & \\
 & 5 & 1 & 3 & 9 & 17 & 11 &  &  &  &  &  & \\
 & 6 & 1 & 3 & 9 & 27 & 61 & 55 &  &  &  &  & \\
 & 7 & 1 & 3 & 9 & 27 & 61 & 103 & 105 &  &  &  & \\
 & 8 & 1 & 3 & 9 & 27 & 61 & 103 & 133 & 127 &  &  & \\
 & 9 & 1 & 3 & 9 & 27 & 61 & 103 & 133 & 143 &  &  & \\
 & 10 & 1 & 3 & 9 & 27 & 61 & 103 & 133 & 143 &  &  & \\
\addlinespace[3mm]
5 & 5 & 1 & 4 & 6 & 4 & 1 &  &  &  &  &  & \\
 & 6 & 1 & 4 & 16 & 44 & 56 & 26 &  &  &  &  & \\
 & 7 & 1 & 4 & 16 & 64 & 206 & 356 & 229 &  &  &  & \\
 & 8 & 1 & 4 & 16 & 64 & 206 & 524 & 964 & 786 &  &  & \\
 & 9 & 1 & 4 & 16 & 64 & 206 & 524 & 1076 & 1802 & 1700 &  & \\
 & 10 & 1 & 4 & 16 & 64 & 206 & 524 & 1076 & 1874 & 2906 & 3044 & \\
 & 11 & 1 & 4 & 16 & 64 & 206 & 524 & 1076 & 1874 & 2951 & 4374 & \\
 & 12 & 1 & 4 & 16 & 64 & 206 & 524 & 1076 & 1874 & 2951 & 4374 & \\
\addlinespace[3mm]
6 & 6 & 1 & 5 & 10 & 10 & 5 & 1 &  &  &  &  & \\
 & 7 & 1 & 5 & 25 & 90 & 170 & 157 & 57 &  &  &  & \\
 & 8 & 1 & 5 & 25 & 125 & 520 & 1312 & 1660 & 812 &  &  & \\
 & 9 & 1 & 5 & 25 & 125 & 520 & 1760 & 4600 & 7100 & 4429 &  & \\
 & 10 & 1 & 5 & 25 & 125 & 520 & 1760 & 4936 & 11672 & 19729 & 15073 & \\
 & 11 & 1 & 5 & 25 & 125 & 520 & 1760 & 4936 & 11912 & 25759 & 45513 & 41145\\
 & 12 & 1 & 5 & 25 & 125 & 520 & 1760 & 4936 & 11912 & 25924 & 52828 & 95078\\
 & 13 & 1 & 5 & 25 & 125 & 520 & 1760 & 4936 & 11912 & 25924 & 52828 & 101876\\
 & 14 & 1 & 5 & 25 & 125 & 520 & 1760 & 4936 & 11912 & 25924 & 52828 & 101876\\
\bottomrule
\end{tabular}
\end{center}

\bibliography{bibML}
\bibliographystyle{plain}
\end{document}